\def\Cat{\cate{Cat}}
\def\cate#1{\mathsf{#1}}
\def\cf{see\@\xspace}
\def\d{\partial}
\def\Der{\mathsf{Der}}
\def\dopb{\begin{tikzpicture}[scale=.375] \draw (0,0) -| (1,1);\end{tikzpicture}}
\def\dopo{\begin{tikzpicture}[scale=.375] \draw (0,0) |- (1,1);\end{tikzpicture}}
\def\emdash{\text{-}}
\def\Fin{\cate{Fin}}
\def\l{\textsc{l}}
\def\lact{\mathbin{\rotatebox[origin=c]{90}{$\oslash$}}}
\def\Lan{\mathrm{Lan}}
\def\Mod{\cate{Mod}}
\def\op{\text{op}}
\def\pb{\ar@{}[dr]|(.375){\dopb}}
\def\po{\ar@{}[dr]|(.675){\dopo}}
\def\r{\textsc{r}}
\def\ract{\oslash}
\def\Set{\cate{Set}}
\def\To{\Rightarrow}
\renewcommand{\cup}{\mathop{+}}
\newcommand{\leib}[2]{\d #1 \otimes #2 \cup #1 \otimes \d #2}
\theoremstyle{definition}
 \newtheorem{notation}[subsection]{Notation}
 \newtheorem{definition}[subsection]{Definition}
 \newtheorem{remark}[subsection]{Remark}
 \newtheorem{warning}[subsection]{Warning}
 \newtheorem{example}[subsection]{Example}
 \newtheorem{proposition}[subsection]{Proposition}
 \newtheorem{corollary}[subsection]{Corollary}
 \newtheorem{theorem}[subsection]{Theorem}
\NewDocumentCommand{\makeabbrev}{mmm}
{
	\yoruk_makeabbrev:nnn { #1 } { #2 } { #3 }
}
\makeabbrev{\mathbf}{bf#1}{
	a,b,c,d,e,g,h,i,j,k,l,m,n,o,p,q,r,t,u,v,w,x,y,z,%
	A,B,C,D,E,G,H,I,J,K,L,M,N,O,P,Q,R,T,U,V,W,X,Y,Z }
\makeabbrev{\boldsymbol}{bs#1}{%
	a,b,c,d,e,f,g,h,i,j,k,l,m,n,o,p,q,r,s,t,u,v,w,x,y,z,%
	A,B,C,D,E,F,G,H,I,J,K,L,M,N,O,P,Q,R,S,T,U,V,W,X,Y,Z }
\makeabbrev{\mathsf}{sf#1}{
	a,b,c,d,e,f,g,h,i,j,k,l,m,n,o,p,q,r,s,t,u,v,w,x,y,z,%
	A,B,C,D,E,F,G,H,I,J,K,L,M,N,O,P,Q,R,S,T,U,V,W,X,Y,Z }
\makeabbrev{\mathfrak}{fk#1}{
	a,b,c,d,e,f,g,h,j,k,i,l,m,n,o,p,q,r,s,t,u,v,w,x,y,z,%
	A,B,C,D,E,F,G,H,I,J,K,L,M,N,O,P,Q,R,S,T,U,V,W,X,Y,Z }
\makeabbrev{\mathcal}{cl#1}{
	A,B,C,D,E,F,G,H,I,J,K,L,M,N,O,P,Q,R,S,T,U,V,W,X,Y,Z }
\makeabbrev{\mathbb}{bb#1}{
	A,B,C,D,E,F,G,H,I,J,K,L,M,N,O,P,Q,R,S,T,U,V,W,X,Y,Z }
\makeabbrev{\underline}{u#1}{
	a,b,c,d,e,f,g,h,i,j,k,l,m,n,o,p,q,r,s,t,u,v,w,x,y,z,
	A,B,C,D,E,F,G,H,I,J,K,L,M,N,O,P,Q,R,S,T,U,V,W,X,Y,Z }
\newlength{\seplen}
\newlength{\sepwid}
\newcommand{\firstblank}{\,\rule{\seplen}{\sepwid}\,}
\newcommand{\cop}[2]{
	\left[\begin{smallmatrix}
			#1 \\ #2
		\end{smallmatrix}\right]
}
\newenvironment{adju}[1][0.925]{%
	\begin{center}\begin{adjustbox}{max height=0.5\textheight, max width=#1\textwidth}}{\end{adjustbox}\end{center}}
\let\xto\xrightarrow
\def\[{\begin{equation}}
			\def\]{\end{equation}}
\def\firstblank{-}
\def\initial{0}
\def\Spc{\cate{Spc}}
\title{Differential 2-rigs}
\author{%
  Fosco Loregian
  \email{folore@ttu.ee}
  \institute{Tallinn University of Technology, Tallinn, Estonia}%
	\thanks{Supported by the ESF funded Estonian IT Academy research measure (project 2014--2020.4.05.19--0001).}
	\and
	Todd Trimble
	\email{topological.musings@gmail.com}
  \institute{Western Connecticut State University, Danbury, CT}}
\begin{document}
\maketitle
\begin{abstract}
We study the notion of a \emph{differential 2-rig}, a category $\clR$ with coproducts and a monoidal structure distributing over them, also equipped with an endofunctor $\d : \clR \to \clR$ that satisfies a categorified analogue of the Leibniz rule. This is intended as a tool to unify various applications of such categories to computer science, algebraic topology, and enumerative combinatorics. The theory of differential 2-rigs has a geometric flavour
but boils down to a specialization of the theory of tensorial strengths on endofunctors;
this builds a surprising connection between apparently disconnected fields. We build \emph{free 2-rigs} on a signature, and we prove various initiality results: for example, a certain category of colored species is the free differential 2-rig on a single generator.
\end{abstract}

\section{Introduction}
The aim of the present paper can be shortly summarized as follows: study a pair $(\clR,\d)$, where $\clR$ is a `categorified ring' and $\d : \clR \to \clR$ an endofunctor preserving coproducts and satisfying the `Leibniz rule'.

Adapting terminology from classical ring theory, such a pair $(\clR,\d)$ could be termed a \emph{differential 2-rig}, and $\d$ a \emph{derivation} on $\clR$; the study of such structures could thus be viewed as a categorified version of \emph{differential algebra} \cite[Ch. 1]{book:crespo}, an important part of modern commutative algebra \cite[III.10]{BourbakiAlgebre}, finding applications (among other areas) in Galois theory \cite{pommaret1994differential} and in symbolic computation \cite{chyzak:hal-01069833}.

Building on this, in our work, a 2-rig\footnote{An important difference with classical ring theory is that the request that $\clR$ admits `additive inverses' is an extremely restrictive one. This motivates our choice of terminology: a \emph{rig} $(R,+,\cdot)$ --also called a \emph{semiring}-- is a ri\emph{n}g without  \emph{n}egatives, i.e. an algebraic structure that satisfies all the axioms of a ring, but where $(R,+)$ is just a commutative monoid.} will be a category $\clR$ equipped with two structures, one additive and one multiplicative, such that the latter `distributes' over the former: at its most basic level, this is the requirement that, for objects $A,B\in\clR$, the endofunctors $A\otimes-$ and $-\otimes B$ distribute over coproducts, i.e. there are natural isomorphisms $A\otimes(B+C)\cong A\otimes B + A \otimes C$ and $(B+C)\otimes A \cong B \otimes A + C\otimes A$. Nevertheless, our main definition will be fairly more general, treating other shapes of colimits apart from this basic one.
\paragraph{Literature on 2-rigs.}
A motivating example of `categorified calculus on a 2-rig' is Joyal's theory of species and analytic functors \cite{Joyal1986foncteurs,joyal1981theorie,bergeron1998combinatorial} providing a categorical foundation for enumerative combinatorics and finding concrete applications as a model of PCF \cite{HASEGAWA2002113}.\footnote{The equivalence between analytic functors, regarded as `categorified formal power series', and species is a long-established result first proved in \cite{Joyal1986foncteurs}; see also \cite{adamek2008analytic}.}
The category of combinatorial species (functors $\Sigma^\op \to \Set$ from the category $\Sigma$ of finite sets and bijections) is a prominent example of a 2-rig which supports a viable notion of derivative functor, and it will always be our motivating example and test-bench for definitions.

This situates our work on a different ground than another important piece of literature dealing with notions of derivation on a category, namely the theory of \emph{differential categories} of Blute, Cockett et al. \cite{blute2009cartesian}. Differential categories were developed to provide a categorical doctrine for \emph{differential linear logic}; as a rule of thumb, the fundamental difference between the two approaches lies in where the categorified derivative operation acts. In differential categories, every \emph{morphism} has a derivative assigned via a so-called differential combinator; instead, we focus on deriving \emph{objects} functorially and naturally.\footnote{For the sake of completeness, we shall mention yet another approach to `categorical differentiation' recently developed in \cite{2103.07960} with applications to ZX calculus in mind; again, there seems to be no relation with our theory of differential 2-rigs, since derivations on their category Mat-$S$ are not Leibniz on objects.}

Elsewhere, terms like `2-rig' or `rig categories' have been appropriated by different authors to mean different things. For example, \cite{HDA3} defines a 2-rig to be a cocomplete symmetric monoidal category in which the monoidal product distributes over all colimits, and in \cite{BMT2021}, `2-rig' has meant a $\cate{Vect}$-enriched symmetric monoidal category with biproducts and idempotent splittings (where the distributivity is automatic). On the other hand, the term `rig category' or `distributive category' \cite{laplaza1972coherence} has been used to mean a category with two monoidal products, one called `multiplicative', which distributes over the other, called `additive'. It is easy to imagine variations on these themes: 2-rigs which are only fi\-ni\-te\-ly cocomplete, or that are assumed only to have \emph{finite} coproducts (which we consider to be a baseline assumption).

Alternatively, on the multiplicative side, one might want infinite products and a complete distributivity law over infinite coproducts. This type of `2-rig' would be germane to the study of polynomial functors in the sense of \cite{gambino2013polynomial,kock2017polynomial,myers2020dirichlet,spivak2020dirichlet,spivak2020poly}, which have provided a unifying setting for studying numerous structures in applied category theory.
Given the multiplicity of possible definitions of 2-rig, we believe it makes sense not to fix a single notion of 2-rig but to be flexible and contemplate a whole spectrum of possible theories, or `doctrines' of \emph{$\bsD$-rigs} parametrized by $\bsD$, a 2-monad on $\Cat$ (locally small categories) whose algebras will possess colimits of a certain shape.
\paragraph{Our main contributions.}
The first goal of this paper is to provide a generalized framework in which each of these instances can be studied on the same foot; our main definition for a `doctrine of 2-rigs', \ref{doctrine_of_2rigs}, is geared in this direction.
Besides unifying most notions of 2-rig under a common framework, in this paper we are also interested in seeing how different $\bsD$-rigs $\clR$, for different doctrines $\bsD$, interact with an accompanying notion of derivation $\d : \clR \to \clR$ (roughly, functors which obey a Leibniz rule, $\d(A\otimes B)\cong \d A\otimes B + A\otimes \d B$, see \ref{def_derivation}). Unexpectedly, depending on the doctrine, derivations may be either virtually non\-existent (cf. \ref{nogo1} and \ref{nogo2}), as is the case when the multiplicative structure is cartesian, or may exist in great plenitude, typically when the multiplicative structure enjoys a more `linear' character (in the sense of Girard's linear logic \cite{Gir1987}). Within a doctrine $\bsD$ where derivations are prevalent, they may also be used to give a notion of `dimension of a $\bsD$-rig' (cf. \ref{dim_of_rigs}).

In such cases, one generally expects derivations to be potent and unifying tools. We show that this is the case, once again guided by the theory of species as motivating example, and the usage of derivatives in the hands of the `Montreal school of categorical combinatorics' \cite{a0e13a522d655a338dcd3eb0d86f31d83d15c9bd,10.1007/BFb0072518,0445243bd75f64484c47f7db18f2569031b5e3bd,4871af04f741391c618dddc08247a3c09b1c707d,7c97349a4b1a346b1746f0519c000954171f7c20} (see also the more recent \cite{RAJAN1993197,a7260d9aa6cf11af5e8a6e16d6cb29f836d2ff04}), where differential equations written in the category of species, as well as their solutions, are fruitfully interpreted combinatorially. We prove that categories of species are `necessary objects' in a general theory of 2-rigs, because they arise as free objects for specific doctrines of 2-rigs and acquire a canonical choice of a differential structure. Moreover, in \ref{diff_poly_rig} we prove that the category of species on a countable set, equipped with a `shifting' derivation operation, is the free differential 2-rig on one generator.

Derivations can also be used to shed light on the theory of operads; for example, recent results by O\-bra\-do\-vich \cite{obrad2017} show that ordinary (permutative) operads are certain types of monoids for a skew monoidal structure $F' \otimes G$ defined using the derivative, and that cyclic operads \cite{GetzKap} also admit an efficient description in terms of derivatives of species.
In spite the large effort to understand the properties of a \emph{specific} instance of differential 2-rig (see \cite{a0e13a522d655a338dcd3eb0d86f31d83d15c9bd,10.1007/BFb0072518,0445243bd75f64484c47f7db18f2569031b5e3bd,4871af04f741391c618dddc08247a3c09b1c707d,7c97349a4b1a346b1746f0519c000954171f7c20} for the theory of ODEs in the category of species, and a variety of works by M. Fiore \cite{10.1007/978-3-540-31982-5_2,1506.06402,Fiore2012} that explored in detail the meaning of \emph{bijective proofs} in terms of datatype structures), a systematic study of general properties of differential 2-rigs (a `synthetic 2-rig theory', so to speak) has never been attempted.

Thus, one first aim of the present paper is to give all the various notions of 2-rig and derivations their proper due, while balancing generality and applicability, and unifying diverse approaches. In developing the rudiments of this framework, we aim to clarify what is specific to the category $\Spc$ of species and what instead follows from a general theory of 2-rigs and concentrate on the latter to generalize the former (to other doctrines, other flavours of monoidality, other flavours of species --colored \cite{mendez1993colored} or linear \cite{10.1007/BFb0072518}).

As a showcase example, in \ref{chain} we present a `chain rule' that categorifies the well-known calculus theorem $(f\circ g)'(x)=f'(g(x))g'(x)$ and that holds good across a broad spectrum of doctrines of 2-rigs, thus generalizing the chain rule true for species and proved by Joyal in his early works.

\paragraph*{Structure of the paper.}
In \ref{doc_of_tworigs} we introduce the main object of discussion of our paper: a 2-rig for a `doctrine' $\bsD$, i.e. a specified class of colimits, is a category having all colimits specified by $\bsD$, and a monoidal structure $\otimes$ that distributes over said colimits; we show how this formalism is capable of encompassing most of the various notions of 2-rig scattered in the literature; in \ref{modules_n_stre} we outline the fundamental definition in order to arrive at a definition of derivation on a $\bsD$-rig $\clR$, a pair of \emph{tensorial strengths} interacting well with each other; to the best of our knowledge, the characterization of tensorial strengths as lax natural transformations in \eqref{dis} and \eqref{dat} has not been accounted elsewhere. Section \ref{diff_2_basic} is the heart of the paper: a differential 2-rig is defined in \ref{def_derivation}; after this we concentrate on the major example of combinatorial species, in \ref{spe_exa},
and we prove that for every $\otimes$-monoid $M$, its derivative $\d M$ is a $M$-module. In \ref{free_2_rigs} we provide the construction of free $\bsD$-rigs and prove that free 2-rigs acquire differential structures (\ref{free_are_diff}) as well as various initiality results: for example, the category of ($S$-colored) species is the free cocomplete 2-rig on a single (on $|S|$) generator; in \ref{conclu} we draw the conclusions of the paper and sketch ideas for future development: the opportunity to gain a geometric view on applicatives, through derivations on a 2-rig seems to be a promising prospect, as well as the application of our general theory to a synthetic approach to combinatorial differential equations.
\section{Doctrines of 2-rigs}\label{doc_of_tworigs}

Before defining a notion of 2-rig doctrine, we present a few examples that play a guiding role and show a need for such a general notion.
\begin{example}[A list of motivating examples]\leavevmode\label{2rig}
	\begin{itemize}
		\item The category of presheaves $[\clM^\op, \Set]$ over a monoidal category $\clM$, equipped with the Day convolution, and its $\clV$-enriched analogue $[\clM^\op,\clV]$ \cite{day:report}. Note how the convolution product tends to inherit other structures of the monoidal product $\otimes$; e.g., the Day convolution $(F,G)\mapsto F * G$ is symmetric (or braided, or cartesian monoidal) if $\otimes$ is so, and it acts as the free monoidally cocomplete \cite{imkelly} category on $\clM$.
		\item The category of finite-dimensional vector bundles over a space or finitely generated projective modules over a commutative ring. These categories admit coproducts and tensor products, but not general colimits. Nor would one necessarily want to impose general colimits because of phenomena like `Eilenberg swindles' \cite{poenaru2007infinite}. These examples of `2-rigs' are typically enriched in vector spaces or the like, and typically the only colimits envisaged are \emph{absolute colimits}: those that are preserved by every (enriched) functor.
		\item Between these two extremes, one sometimes considers `2-rigs' which have colimits over diagrams that are bo\-und\-ed in size: for example, the categories of finite $G$-sets for some group $G$ that may be infinite, or of continuous finite $G$-sets for some topological group $G$, admit only finite colimits. Or, in the theory of locally $\kappa$-presentable categories, the subcategory of compact objects will admit colimits over diagrams bounded in size by $\kappa$.
	\end{itemize}
\end{example}
Guided by such examples, the following definitions are meant to encompass a spectrum of notions of 2-rigs that have arisen in practice.
\begin{definition}[Additive doctrine]\label{additive_doc}
	An \emph{additive doctrine} is a 2-category whose objects are categories that admit all colimits of diagrams belonging to a prescribed class, including at least finite discrete diagrams --whose colimits are finite coproducts, denoted with the infix $\cup$, and $0$ for the empty coproduct. Morphisms of a doctrine $\bsD$ are functors that preserve colimits of that class, and 2-cells are natural transformations between such functors.\footnote{A more general notion of additive doctrine is obtained by considering enriched analogues as well; in this paper, we mostly focus on the unenriched (i.e., $\Set$-enriched) case.}
\end{definition}
In each case, we may instead work with a stricter notion of additive doctrine where objects are categories with \emph{chosen} colimits: these are strict algebras of a strict 2-monad, which is often technically convenient. Strict algebra morphisms preserve those chosen colimits strictly, which is not what one wants, but pseudomorphisms preserve colimits in the usual sense \cite{Lack2010}.

So, an additive doctrine is determined by a (strict or pseu\-do) 2-monad $\bsA$ on $\Cat$, of which we consider the category of algebras. In short, the notion of an additive doctrine takes care of the additive monoid part of a 2-rig; as for the multiplicative part, we can similarly state the following definition.
\begin{definition}[Multiplicative doctrine]\label{mul_doc}
	A \emph{multiplicative doctrine} is a 2-category that is monadic (in the 2-categorical sense) over the 2-category $\cate{MCat}_s$ of monoidal categories, strong monoidal functors, and monoidal transformations, such that the composition of monadic functors,
	\[U_{\clM} = (\clM \to \cate{MCat}_s \to \cate{Cat}),\label{monadic_U}
	\]
	is also 2-monadic.
\end{definition}
Intuitively, a multiplicative doctrine consists of a category of monoidal categories, possibly equipped with additional structure, that arises as the category of algebras for a monad on $\Cat$. So, a multiplicative doctrine is given by a 2-monad $\bsM$ on $\Cat$ modelled over the 2-monad whose algebras are monoidal categories, of which we consider the 2-category of algebras.

The 2-category $\cate{MCat}$ of monoidal categories is trivially an example of multiplicative doctrine; so are the 2-category of symmetric, braided, or strict monoidal categories. For symmetric monoidal categories, algebra pseudomorphisms coincide with strong symmetric monoidal functors.\footnote{One might also want to replace $\cate{MCat}_s$ with the 2\hyp{}category $\cate{MCat}_l$ (having lax monoidal functors as 1-cells) or $\cate{MCat}_c$ (colax functors), but we do not explore such a generalization here. Also, it is well-known that the composition of monadic functors can fail to be monadic; to correct this shortcoming, various flatness conditions such as `preserving codescent objects' may be imposed on a (2-)monadic functor $G: \clM \to \cate{MCat}$ to guarantee that the composition $U_{\clM} = UG: \clM \to \cate{Cat}$ is also monadic, but this issue is somewhat technical, and it will not be pursued here.}
Finally, we need a notion of what it means for a multiplicative doctrine to \emph{distribute} over an additive doctrine. Intuitively, this is taken care by a \emph{distributive law} in the sense of \cite{beck1969distributive} between the two doctrines.

Let $\bsA$ be the 2-monad for any additive doctrine in the sense above, and let $\bfP$ be the 2-monad for the additive doctrine of all small-cocomplete categories, whose underlying functor $P$ takes a locally small category $\clC$ to the category consisting of presheaves $\clC^\op \to \Set$ that are small colimits of representable functors. We have an inclusion of 2-monads $j: \bsA \to \bfP$. Temporarily, let $\bsM$ denote the 2-monad whose algebras are monoidal categories, with underlying functor $M$.
Now, the Day convolution monoidal structure provides for each mo\-no\-i\-dal category $\clC$ a monoidal structure on the free small-cocompletion on its underlying category, $PU\clC$, and this construction also works as free cocompletion for monoidal categories \cite{imkelly}.
In other words, $PU\clC$ carries a canonical $\bsM$-algebra structure
$MPU\clC \to PU\clC$
pseudonatural in $\clC$, thus leading to an action
$MPU \To PU$
and such an action is equivalent to a canonical distributive law between monads
$\delta: \bsM\bfP \To \bfP\bsM$.
The only thing required to set up this distributive law is that Day-convolving on either side, $A * -$ or $- * A$, preserves all small colimits. This remains true \cite{szlachanyi2005monoidal} for any restricted class of colimits coming from an additive doctrine given by a monad $\bsA$ on $\Cat$; thus, we obtain by restriction a  distributive law
\[\delta': \bsM\bsA \To \bsA\bsM\]
or what is essentially the same, a canonical lifting $\hat{\bsA}$ of $\bsA$ as follows: there is a functor $\hat A : \cate{MCat} \to \cate{MCat}$ such that $U\circ \hat A \cong A \circ U$.
\begin{definition}
	A \emph{distributivity} of a multiplicative doctrine $\clM$ over an additive doctrine $\clA = \bsA\emdash\cate{Alg}$ is a choice of lift $\tilde{\bsA}$ in the diagram
	\[
		\vcenter{\xymatrix{
		\clM \ar[r]^{\tilde{A}} \ar[d] & \clM \ar[d] \\ \cate{MCat} \ar[r]_{\hat{A}} & \cate{MCat}
		}}
	\]
\end{definition}
Thanks to \cite[Definition 33, Remark 34]{walker2019distributive} such distributivities are essentially unique. This is particularly the case when the unit of the monad for the multiplicative doctrine over $\cate{MCat}$ is essentially surjective on objects (eso). In this case, the distributivity is uniquely given by the Day convolution structure at the underlying monoidal category level.

Now let $\bsM$ denote the monad for the monadic functor $U_{\clM} \to \cate{Cat}$ in \eqref{monadic_U}. As above, a distributivity $\tilde{A}$ amounts to an $\bsM$-action $MA U_{\clM} \To A U_{\clM}$, which corresponds to a 2-distributive law $\delta: \bsM\bsA \To \bsA\bsM$ between 2-monads.
\begin{definition}[Doctrine of 2-rigs]\label{doctrine_of_2rigs}
	A \emph{doctrine of 2-rigs} consists of an additive doctrine $\bsA$, a multiplicative doctrine $\bsM$, and a distributivity of $\bsM$ over $\bsA$.
\end{definition}
Using the distributive law, one obtains a structure of 2-monad $\bsA\bsM$ for the composition of functors. If one uses strict versions of the 2-monads $\bsA$ and $\bsM$, one obtains a strict 2-monad $\bsD = \bsA\bsM$, and a strict notion of $\bsD$-rig, with pseudomorphisms as an appropriate notion of \emph{morphism} of $\bsD$-rigs.
\begin{remark}
The original notion of `distributive category' given by Laplaza in \cite{laplaza1972coherence} is more general because it only asks for the presence of two monoidal structures (the `additive' one is not necessarily cocartesian). In such a setting, the complexity of the diagrams required to ensure coherence is daunting (cf. \cite[2.1.1]{elgueta2020groupoid}); our choice, where `coherence conditions' follow automatically from universal properties, avoids this problem by design.
\end{remark}
\begin{warning}
Whereas ordinary rigs form discrete distributive categories, ordinary rigs do not give discrete 2-rigs in our sense, since the only discrete category admitting finite coproducts is the singleton. Nor is a 2-rig with a single object an exciting notion: a monoidal category with a single object is a commutative monoid; by the Eckmann-Hilton argument, the two operations of 2-rig collapse in one single commutative monoid structure, for which multiplication $a\cdot\firstblank : M \to M$ is a monoid homomorphism.
\end{warning}
We can build on the definition of a doctrine of 2-rigs and turn our attention to some specific examples of interest, where we assume something more on the additive or multiplicative doctrine in study (symmetry, or the presence of more shapes of colimit).
\begin{definition}[Terminological conventions for doctrines of 2-rigs]\label{comm_coco_closed}\leavevmode
	\begin{itemize}
		\item A doctrine is \emph{symmetric} (resp., \emph{braided}, \emph{cartesian}) if the underlying multiplicative doctrine is the doctrine of symmetric (resp., braided, cartesian) monoidal categories.
		\item A doctrine is ($\kappa$-)\emph{cocomplete} if the class of all ($\kappa$-)small colimits gives the underlying additive doctrine.
		\item When the multiplicative doctrine is that of monoidal categories, then for an additive doctrine $\bsA$ we may refer to the 2-rigs as \emph{monoidally $\bsA$-cocomplete} categories.
		\item By default, `\emph{the}' doctrine of 2-rigs refers to the minimal notion of 2-rigs, where the multiplicative doctrine is just the doctrine of monoidal categories, and the additive doctrine is the $\omega$-additive doctrine.
		\item A \emph{closed} 2-rig is a category $\clR$ as in \ref{2rig} such that each $A\otimes -$ and $-\otimes B$ have right adjoints; in this case, of course, they preserve all colimits that exist in $\clR$.
	\end{itemize}
\end{definition}
\begin{notation}
With a small abuse of language, when we refer to a 2-rig as symmetric, cocomplete, \dots, we declare that we intend to consider it as an object of a 2-rig doctrine thus designated. When necessary, we call just `2-rig' an object of the minimal 2-rig doctrine.
\end{notation}
\begin{example}\label{ex_2rigs}
	The following are examples of 2-rigs:
	\begin{enumtag}{ra}
		\item \label{re_1} Any monoidal category $(\clV,\otimes,I)$ with the property that $\otimes$ preserves $\kappa$-ary coproducts is a monoidally $\kappa$-additive category. This includes the category of sets, any cartesian closed category with finite coproducts, the category of modules over a ring $R$ or, more generally, the category $\Mod^{\clV}_R$ of modules over a monoid $R$ in a suitable monoidal base $\clV$.
		\item \label{re_2} In the same notation, the category $[\clA,\clV]$ of $\clV$\hyp{}enriched presheaves over a (symmetric) monoidal $\clV$\hyp{}category $(\clA,\oplus,j)$, endowed with the Day convolution monoidal structure
		is a (symmetric) closed 2-rig.
		\item An example of a non-symmetric 2-rig is the category $[\clA,\clA]_+\subseteq [\clA,\clA]$ of endofunctors $F : \clA \to \clA$ that commute with finite coproducts.
	\end{enumtag}
\end{example}
\section{Modules and strengths}\label{modules_n_stre}
Just as ordinary rings and rigs act on modules, so 2-rigs or $\bsD$-rigs (for a 2-rig doctrine $\bsD = (\bsA, \bsM, \delta)$) act on 2-modules, sometimes called \emph{actegories} (cf. \cite{janelidze2001note}). For the same additive doctrine $\bsA$, if
$\clC$ is an $\bsA$-algebra, then we may form the endohom $[\clC, \clC]$ of $\bsA$-algebra maps or $\bsA$-cocontinuous functors, and this endohom forms a monoidally $\bsA$-cocomplete category. If in addition $\clC$ is a $\bsD$-rig, then it has an underlying monoidally $\bsA$-cocomplete category.

Keeping this in mind, we give the following definition to capture an action of $\clR$ on a category $\clC$ as a suitable rig endomorphism.
\begin{definition}
	A (left) $\clR$-module structure (or actegory structure) on $\clC$ is a monoidally $\bsA$-cocontinuous map
		$\clR \to [\clC, \clC]$.
\end{definition}
\begin{example}\label{cailei}
A simple example is $\clR$ acting on itself, so the map above takes an object $R$ to the functor $R \otimes -: \clR \to \clR$. This is called the \emph{left Cayley action}. For objects $R$ of $\clR$ and $C$ of $\clC$, we sometimes use $R \lact C$ to denote values of left module actions. In some tautological cases, for example, the left Cayley action, we use ordinary tensor product notation $R \otimes R'$.
\end{example}
\begin{remark}
As a monoidal category, $\clR$ may also be construed as a one-object bicategory $B\clR$, and an $\clR$-module may be construed as a pseudofunctor of bicategories
	$B\clR \to \bsA\emdash\cate{Alg}$
that is locally $\bsA$-cocontinuous.
\end{remark}
In this notation, we can provide a sensible notion for a morphism of modules.
\begin{definition}
	Given $\clR$-modules $\clC, \clD: B\clR \to \bsA\emdash\cate{Alg}$, a \emph{morphism} from $\clC$ to $\clD$ is a lax natural transformation $\clC \to \clD$.
\end{definition}
It is worth unpacking this very terse definition. Here a lax natural transformation takes the unique object of $B\clR$ to a 1-cell $F: \clC \to \clD$, in other words an $\bsA$-continuous functor of this form. It takes 1-cells of $B\clR$, i.e. objects $R$ of $\clR$, to 2-cells which take the form of families in $\clD$,
\[\label{dis}
	R \lact FC \to F(R \lact C),
\]
that are natural in $C$. This 2-cell constraint is often called a \emph{strength} on $F$; we call it a \emph{left strength}. The lax naturality axioms provide the usual axioms for a tensorial strength as defined, e.g. in \cite{kock1972strong}.

One can define right module structures by reversing the 1-cells of $B\clR$, i.e., reversing the order of tensoring,
	$(B\clR)^\op \to \bsA\emdash\cate{Alg}$.
For example, we have a right Cayley action that takes an object $R$ to $- \otimes R$. Then, a 2-cell constraint for a lax natural transformation between right module structures is called a \emph{right strength}. It involves natural families, sometimes written as
\[\label{dat}
	FC \ract R \to F(C \ract R).
\]
Similarly, one can define bimodules as homomorphisms
	$(B\clR)^\op \times B\clR \to [\clC, \clC]$
(for example, there is an evident Cayley bimodule with $\clR$ acting on itself on both the left and right), and consider bistrengths.
\begin{example}
	Here is one type of example that recurs frequently for us. Suppose given a $\bsD$-rig map $\varphi: \clR \to \clS$. This induces a homomorphism $\varphi^\op \times \varphi: B\clR^\op \times B\clR \to B\clS^\op \times B\clS$, which composes with the Cayley bimodule of $\clS$. Letting  $\alpha_{\clR}$, $\alpha_{\clS}$ denote the Cayley bimodules, the data of a morphism from $\alpha_{\clR}$ to $\alpha_{\clS}(\varphi^\op \times \varphi)$ entails an $\bsA$-cocontinuous functor $G: \clR \to \clS$ with a (`$\varphi$-augmented') left and right strength $\varphi(R) \otimes GR' \to G(R \otimes R')$ and $GR' \otimes \varphi(R) \to G(R' \otimes R)$.
\end{example}
\section{Differential 2-rigs: basic theory}\label{diff_2_basic}
We now turn to the main definition of the present paper, that of a \emph{derivation} on a 2-rig; in simple terms, if a 2-rig categorifies the notion of ri(n)g $R$, a derivation on a 2-rig categorifies the notion of derivation on $R$, widely used in commutative algebra and finding applications to the Galois theory of differential equations (\cf \cite{pommaret1994differential,van2012galois}).
\begin{definition}[Derivation on a 2-rig]\label{def_derivation}
	Let $\bsD$ be a 2-rig doctrine, let $\clR$ be a $\bsD$-rig, and let $\clM$ be a $\clR$-bimodule. An $\clM$-valued \emph{derivation} of $\clR$ is a bimodule morphism $\d$ from the Cayley bimodule of $\clR$ (cf. \ref{cailei}) to $\clM$, such that the canonical natural maps
	\[\fkl : \d C \ract C' \cup C\lact \d C' \to \d(C\otimes C') \qquad \fki : 0 \to \d(I)\]
	are isomorphisms.
\end{definition}
  We will refer to the map $\fkl$ above as the \emph{leibnizator} map of the derivation.

	Here the first arrow is defined by pairing the module right strength $\d C \ract C' \to \d(C \otimes C')$ with the module left strength $C \lact \d C' \to \d(C \otimes C')$.
\begin{definition}[Differential $\bsD$-rig]\label{the_main_def}
	A \emph{differential $\bsD$-rig} is a $\bsD$-rig $\clR$ equipped with a derivation from the Cayley bimodule of $\clR$ to itself.
\end{definition}
\begin{example}\label{spe_exa}
A paradigmatic example of a differential 2-rig is given by the category of Joyal species with its standard derivative functor, sending $F : \Sigma^\op \to \Set : n\mapsto Fn$ to $F' : n\mapsto F(n+1)$, where $n\in\Sigma$ is an $n$-element set.
\end{example}
In this example, the doctrine is that of symmetric monoidally cocomplete categories, and the category of species is the free symmetric monoidally cocomplete category on one object, i.e. the category of finite sets and bijections. This is the
category of presheaves $\Sigma^\op \to \Set$ on the category of finite sets and bijections, equipped with the Day convolution product induced from the monoidal product on $\Sigma$.

Besides the Leibniz rule, whose validity can be proved via elementary methods, the differential structure in the category of species satisfies two additional properties reminiscent of formal power series theory.\footnote{Given the elementary nature of their proof, we believe both these results pertain to `folklore' in circles of combinatorialists, but we could not find an appropriate reference for them.} If $S,T,U,V$ are objects of $\Sigma$, we can prove the following result (cf. \cite[§8.11]{am2} and \cite[§4.5.4]{Yorgey}). (We provide a proof  of this and of \ref{taylor} appear in Appendix B, page \pageref{proof_of_iter_lei_spec}.)
\begin{proposition}[Generalised Leibniz rule for species]\label{iter_lei_spec}
	Let $\d$ be the standard derivation on species. We can think of the $n$-th derivative $\d^n F$ as a derivative `with respect to a $n$-element set $U$', since in case $|U|=n$ one has
		$\d^nF[A] = F[A+n] \cong F[A\cup U]$.
	Define $F^{(U)}$ by the formula $F^{(U)}[A] = F[A\cup U]$. Now, let $F,G : \Sigma \to \Set$ be two combinatorial species; we have
	\[
		(F\ast G)^{(U)}[C] \cong \sum_{S\cup T=U}(F^{(S)} * G^{(T)})[C].
	\]
\end{proposition}
\begin{theorem}[A Taylor-Maclaurin formula for species]\label{taylor}
	Every species $F : \Sigma \to \Set$ has a `Taylor-Maclaurin' expansion
	\[\label{tmclau}
	F(X+A)\cong
	\int^n F(A+n)\times X^n\cong
	\int^{n\in \bf P} \d^n F(A) \times X^n.
	\]
\end{theorem}
	The name of this result is motivated by the fact that when the coend in \eqref{tmclau} is unwound, we end up with the Taylor expansion $F(X+A)\cong \sum_{n=0}^\infty \frac{\d^n F(A)}{n!}X^n$.

There is a notion of morphism of differential 2-rig, and a notion of morphism of derivations: together, these define the category $2\emdash\cate{Rig}$ of differential 2-rigs, and the category $\Der(\clR,\d)$ of derivations on a given 2-rig. We will not investigate 2-categorical properties of $2\emdash\cate{Rig}$, but the notion of morphism of derivation is necessary to turn \ref{kaler_are_real} into an equivalence of categories instead of just a correspondence on objects.
\begin{definition}[Morphism of differential 2-rigs]\label{mororig}
	Given differential 2-rigs $(\clR,\d)\to (\clS,\d')$, morphisms of differential 2-rigs are morphisms of 2-rigs $F : \clR \to \clS$
	such that $\d' \circ F = F\circ\d$.
\end{definition}
\begin{definition}[Morphism of derivations]\label{moroder}
	Let $\clR$ be a 2-rig, and $\d,\d' : \clR \to \clR$ two derivations in the sense of \ref{def_derivation}. A \emph{morphism of derivations} $\alpha : \d \To \d'$ is a natural transformation of functors
	such that
	the equality of 2-cells
		$\fkl'\circ (\alpha\otimes 1 \cup 1 \otimes\alpha) = (\alpha * \otimes)\circ \fkl$
	holds if $\fkl$ (resp., $\fkl'$) is the leibnizator of $\d$ (resp., $\d'$).
\end{definition}
Now we observe how some notions bearing on 2-rigs, particularly property-like notions for the multiplicative monoidal product, make sense independent of which doctrine of 2-rigs is considered. For example, a 2-rig (relative to any 2-rig doctrine) is \emph{cartesian} if its multiplicative monoidal product is cartesian and is \emph{closed} if tensoring with an object on either side has a right adjoint.

A derivation $\d : \clR \to \clR$ is \emph{trivial} if it is constantly $0$. A $\d$-\emph{constant} object is such that $\d X\cong 0$. Clearly, a derivation is trivial if and only if every object is a $\d$-constant.
The description of derivations on a 2-rig in terms of tensorial strengths leads to two fundamental `no-go theorem' for derivations on a 2-rig: the proofs appear one after the other in Appendix B, page \pageref{proof_of_nogo1}.
\begin{proposition}\label{nogo1}
	A derivation $\d : \clR \to \clR$ on a cartesian 2-rig must be trivial.
\end{proposition}
\begin{proposition}\label{nogo2}
	Suppose that $\clR$ is a closed 2-rig and that the functor $\clR(I, -): \clR \to \cate{Set}$
	is faithful. Then any functor $\d : \clR \to \clR$ can carry at most one left strength and one right strength.
\end{proposition}
\begin{remark}\label{dim_of_rigs}
	The intuition behind \ref{nogo1} and \ref{nogo2} is that in certain categories, every object arises as a coequalizer of maps between coproducts of copies of $I$. If derivations preserve colimits and take $I$ to $0$, then, of course, every object maps to $0$.

	This allows for yet another analogy with differential/algebraic geometry:
	categories satisfying \ref{nogo1}, \ref{nogo2} are `categories of constants' hence are `$0$-dimensional' from the point of view of categorified `dimension theory'.
\end{remark}
The connection between derivations on 2-rigs and tensorial strengths deserves to be spelt out more explicitly: to this end, we provide a general procedure to turn every endofunctor $F : \clR \to \clR$ on a 2-rig into a derivation.

\subsection*{The universal construction of tensorial strengths}

This subsection shows that to every endofunctor $F : \clA \to \clA$ one can associate another endofuctor $\Theta F$, carrying the structure of a cofree coalgebra for a comonad $\Theta$ on $[\clA,\clA]$. This, in turn, follows from the fact that there is a comonad $\Theta^{\l}$ (resp., $\Theta^{\r}$) on the category $[\clA,\clA]$ of endofunctors of a monoidal category $\clA$, that equips an endofunctor $F$ with a cofree left (resp., right) tensorial strength. The proof appears in Appendix B, page \pageref{proof_theta}.
\begin{proposition}\label{the_theta_comonad}
	Let $\clA$ be a complete and left (resp., right) symmetric monoidal closed category; then, there exists a comonad $\Theta$
	on the category $[\clA,\clA]$ of endofunctors of $\clA$, whose coalgebras are exactly the endofunctors equipped with a right (resp., left) tensorial strength.
\end{proposition}
\begin{remark}
	This result entails that given an endofunctor $F : \clR \to \clR$ on a symmetric 2-rig, $F$ is `best\hyp{}approximated' by an endofunctor $\Theta F$ that we might think as a `lax derivation' (by this we mean a fairly weak concept: it's a functor equipped with both a left and right tensorial strength, and as a consequence with a noninvertible leibnizator map $\cop{t^{\l}}{t^{\r}} : \Theta FA\otimes B \cup A\otimes \Theta FB  \to F(A\otimes B)$), obtained by endowing the functor  $F$ it with the cofree strength
	\[
		\label{best_der}
		\xymatrix{
		\Theta F A \otimes B \ar[r]^-{t^{\l}_{AB}} & \Theta F(A\otimes B) & \ar[l]_-{t^{\r}_{AB}} A \otimes \Theta F B
		}
	\]
	using the universal property of coproducts, now one gets the desired map $\cop{t^{\l}}{t^{\r}}$.

	The result remains true when the 2-rig $\clR$ is not symmetric, but a little more care is needed; in that case, one must define $\Theta_{\r}$ (resp., $\Theta_{\l}$) exploiting a right (resp., left) closed structure on $\clA$.
\end{remark}

Given any category $\clC$ and a morphism $f : X\to Y$ one can consider the category $\clC[f^{-1}]$ obtained as the `smallest' category where $f$ becomes an isomorphism (cf. \cite[1.1]{GZ}). In this light, the relevance of this result lies in the fact that one can then formally invert the map $\cop{t^{\l}}{t^{\r}}$ above and endow $\clR$ with a derivation canonically obtained from the pair $(\clR, F)$.

We conclude the section by turning our attention to the following result, which to the best of our knowledge is new, despite the simplicity of its proof: let $M$ be an internal $\otimes$-monoid in a differential 2-rig $(\clR,\otimes, \d)$; the derivative $\d M$ is an $M$-module. The proof appears in Appendix B, page \pageref{proof_of_der_ova_monoid}.
\begin{proposition}\label{der_ova_monoid}
	Let $\clR$ be a 2-rig, and $M$ a internal semigroup (resp., monoid) in $\clR$, with multiplication $m : M\otimes M \to M$ (and unit $e : I \to M$); then the map $\d m : \d M\otimes M \cup M\otimes \d M \to \d M$ amounts to a pair of actions $i_R : \d M \otimes M \to \d M$ and $i_L : M\otimes \d M \to \d M$ of $M$ on its derivative object $\d M$.
\end{proposition}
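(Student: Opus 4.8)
The idea is that deriving the semigroup multiplication $m : M\otimes M \to M$ gives a map out of a coproduct, which by the universal property of $\cup$ is the same as a pair of maps, and these are the candidate actions; the associativity of $m$ then forces these maps to be genuine actions.

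First I would apply $\d$ to $m$ and use the natural isomorphism $\fkl_{MM} : \d M\otimes M \cup M\otimes\d M \to \d(M\otimes M)$ of \autoref{def_derivation} to obtain a map
\[\d M\otimes M \cup M\otimes\d M \xrightarrow{\ \fkl_{MM}\ } \d(M\otimes M) \xrightarrow{\ \d m\ } \d M.\notag\]
By the universal property of the coproduct this composite corresponds to a unique pair $i_L : M\otimes\d M\to\d M$ and $i_R : \d M\otimes M\to\d M$, obtained by precomposing with the two coproduct injections; equivalently, $i_L = {}^R(\d m)$ and $i_R = {}^L(\d m)$ in the notation of \autoref{lr_leibni} applied to $\d m$. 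This is the easy half.

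Next I would verify the action axioms. For associativity of $i_R$, consider the two ways of bracketing $\d M\otimes M\otimes M\to\d M$ and lift them to the level of $\d(M\otimes M\otimes M)$: applying $\d$ to the associativity square $m\circ(m\otimes M)=m\circ(M\otimes m)$ and chasing through the compatibility of $\fkl$ with the associator (axiom \ref{lr_5} of \autoref{def_derivation}) together with naturality (axiom \ref{lr_0}) shows that both composites $i_R\circ(i_R\otimes M)$ and $i_R\circ(\d M\otimes m)$ equal the appropriate restriction of $\d(m\circ(m\otimes M))$ along the coproduct injection picking out the summand $\d M\otimes M\otimes M$ in the expansion of $\d((M\otimes M)\otimes M)$ via \eqref{der_of_tuple}. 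The same argument with the injections for the other two summands handles the left action $i_L$ and the bimodule compatibility $(C\lact M)\ract C'\cong C\lact(M\ract C')$, i.e. $i_R\circ(i_L\otimes M)=i_L\circ(M\otimes i_R)$. (If $M$ is a monoid rather than just a semigroup one also gets unitality from axiom \ref{lr_uni}, but the statement only claims actions of the semigroup, so this is optional.)

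The main obstacle is purely bookkeeping: axiom \ref{lr_5} is a commuting pentagon relating $\fkl_{M\otimes M, M}$, $\fkl_{M, M\otimes M}$, $\fkl_{M,M}\otimes M$ and $M\otimes\fkl_{MM}$, and one must carefully track which of the three summands of $\d A\otimes B\otimes C\cup A\otimes\d B\otimes C\cup A\otimes B\otimes\d C$ each candidate action composite lands in, then match it against the corresponding summand coming from $\d$ applied to the associativity identity for $m$. No genuinely new idea is needed beyond the coherence axioms already packaged into \autoref{def_derivation}; the proof is a diagram chase, so I would present it as such — drawing the relevant cube/pentagon and invoking \ref{lr_0} and \ref{lr_5} — rather than grinding through components.
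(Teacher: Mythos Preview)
Your proposal is correct and follows essentially the same route as the paper: derive $m$, split the resulting map out of the coproduct into $i_R$ and $i_L$, then derive the associativity square for $m$ and read off the action axioms from the three summands of $\d(M\otimes M\otimes M)$ via naturality \ref{lr_0} and the associator compatibility \ref{lr_5}. If anything you are slightly more explicit than the paper, which only cites \ref{lr_0} and leaves the role of \ref{lr_5} (and the bimodule compatibility coming from the middle summand) implicit.
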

\section{The construction of free 2-rigs}\label{free_2_rigs}
In this subsection, we turn our attention to constructions of derivations and differentials, restricting focus to symmetric 2-rig doctrines $\bsD$. Our main technique is to exploit the representability of derivations in the sense of \ref{squzero} and \ref{prop_augm}.

There are several reasons for restricting to symmetric 2-rigs $\clR$. First, in ordinary algebra, the vast majority of applications of derivations are to commutative algebra; categorifying, it is then natural to consider symmetric monoidal structures. Moreover, tensoring functors $R \otimes -: \clR \to \clR$ carry canonical (co)strengths, on account of the symmetry.

In the symmetric case, we can turn any left $\clR$-module $\clM$ into a right module or a bimodule by defining $M \ract R$ to be $R \lact M$. We call bimodules arising this way \emph{symmetric}. (Here, it seems pointless to distinguish between $\lact$ and $\ract$, so we write $\otimes$ instead.)

\begin{definition}[Square-zero extensions]\label{squzero}
	Let $\clR$ be a $\bsD$-rig, and let $\clM$ be a symmetric $\clR$-bimodule.
	Define the \emph{square-zero extension} $\clR \ltimes \clM$ of $\clM$ to be $\clR\times \clM$ as an $\bsA$-algebra, and equipped with a symmetric monoidal product defined by the formula
	\[(A,M)\boxtimes(B,N) := (A\otimes B, A\otimes N \cup M\otimes B),\]
	and with monoidal unit $(I, 0)$. The first projection $\pi: \clR \ltimes \clM \to \clR$ makes this a $\bsD$-rig over $\clR$.
\end{definition}

A straightforward computation allows determining the associators and unitors for the $\boxtimes$ monoidal structure (one must use the compatibility between the left and right module structure on $\clM$) and the left and right distributive maps.

An alternative presentation of the square zero extension, in the case where $\clM$ is the Cayley bimodule of $\clR$ acting on itself, can be given as a `quotient' 2-rig $\clR[Y]/(Y^2)$: a categorification of an algebra of `dual numbers', as explained in the following subsection. This 2-rig is denoted $\clR[\varepsilon]$.

\begin{proposition}\label{prop_augm}
	For a $\bsD$-rig $\clS$ over $\clR$, say $\psi: \clS \to \clR$, there is a natural equivalence between maps $\Phi: \clS \to \clR \ltimes \clM$ in $\bsD\emdash\cate{Rig}/\clR$, and $\psi$-augmented derivations $\d$ of $\clS$ valued in $\clM$, where $\d = \pi_2 \Phi: \clS \to \clM$.
\end{proposition}
The proof is fairly routine since $(\psi, \d)$ being a (strong) symmetric monoidal functor means that we obtain isomorphisms
	$\d(S) \otimes \psi(S') \cup \psi(S) \otimes \d(S') \cong \d(S \otimes S')$
whose restrictions to the summands satisfy the strength coherence conditions, on account of the coherence conditions that obtain for a symmetric monoidal functor.

For example, we can use this proposition to reconstruct the standard derivative on Joyal species $\Spc$, working over the doctrine $\bsD$ of symmetric monoidally cocomplete categories. Consider $\Spc$ as a Cayley bimodule over itself, and form $\Spc[\varepsilon] = \Spc \ltimes \Spc$.

As $\Spc$ is the free symmetric monoidally cocomplete category on one generator $X$ (the representable functor $\Sigma(-, 1)$), there is an equivalence of categories
\[\label{spc_is_freeof1}
	\bsD\emdash\cate{Rig}(\Spc, \Spc[\varepsilon]) \simeq \Spc[\varepsilon].
\]
This means any object $(F, G)$ whatsoever of $\Spc[\varepsilon]$ induces a $\bsD$-rig map $\Phi_{(F, G)}: \Spc \to \Spc[\varepsilon]$, hence (by the Proposition) a $\psi$-augmented derivation for some $\bsD$-rig map $\psi: \Spc \to \Spc$.
Let us be more explicit. First we calculate $\psi = \pi \Phi_{(F, G)}: \Spc \to \Spc$. The pseudonaturality of the equivalence
	$\bsD\emdash\cate{Rig}(\Spc, \clR) \simeq \clR$
means $\pi \Phi_{(F, G)}$ is the unique (\emph{essentially} unique, i.e., unique up to unique isomorphism) symmetric monoidally cocontinuous functor $\psi_F: \Spc \to \Spc$ that carries $X$ to $F$. Proceeding in stages, the functor $F: 1 \to \Spc$ extends essentially uniquely to a symmetric monoidal functor $\tilde{F}: \Sigma \to \Spc$, taking $n$ to the $n$-fold Day convolution $F^{\otimes n}$. Then this extends essentially uniquely to a \emph{cocontinuous} symmetric monoidal functor $\Spc = [\Sigma^\op, \Set] \to \Spc$, according to the formula

\[
	W = \int^{n: \Sigma} W(n) \cdot \Sigma(-, n) \;\; \mapsto \;\; \int^{n: \Sigma} W(n) \cdot F^{\otimes n}.
\]
The last coend is an instance of the \emph{substitution} product of species, denoted $W \circ F$. Whatever it is, the point is that $\psi = (-) \circ F$, where the right side is the essentially unique $\bsD$-rig map $\Spc \to \Spc$ that extends $F: 1 \to \Spc$. In particular, if $F$ is the generator $X$, then $\psi_X$ is the identity on $\Spc$.

Now a derivation $\d: \Spc \to \Spc$ augmented by the identity is just an ordinary derivation, i.e., satisfies $\d(A \otimes B) \cong \d(A) \otimes B \cup A \otimes \d(B)$. The composite
	$1 \stackrel{X}{\to} \Spc \stackrel{\langle \text{id}, \d\rangle}{\to} \Spc[\varepsilon] \stackrel{\pi_2}{\to} \Spc$
is the component $G$ of $(F, G)$, whereas the composition of the last two arrows is $\d$. In other words, $G = \d(X)$. If we want $\d$ to match the standard derivative of species, then we must have $G = X' = I$, the unit of Day convolution.

Therefore, under the natural equivalence of the proposition, the standard derivative of species corresponds to the $\bsD$-rig map $\Spc \to \Spc[\varepsilon]$ that takes the generator $X$ to $(X, I)$. If we take $X$ to some other element $(X, G)$ instead, then the corresponding derivation $\d$ is defined by $\d(F) = F' \otimes G$, because this is after all a derivation, and because $\d(X) \cong X' \otimes G \cong G$ is correct. Note then that every differential structure, i.e., every derivation on $\Spc$ augmented over the identity, is obtained by tensoring the standard derivative by some object.

\begin{remark}
In the analogy between species $F,G$ and formal power series $f,g$, the substitution product corresponds to functional substitution $(f\circ g)(x) = f(g(x))$. The derivative of a substitution can be computed via the \emph{chain rule}, known since Joyal \cite{joyal1981theorie}:
\[(F \circ G)' = (F' \circ G) \otimes G'.\]
We will provide a proof for the chain rule, formulated not only for species but valid in any $\bsD$-rig, in Appendix B, page \pageref{proof_of_chain}.
\end{remark}

\subsection*{Presentations of \texorpdfstring{$\bsD$}{D}-rigs}
Here we provide a construction of free $\bsD$-rigs and give a few sample constructions of other $\bsD$-rigs. We freely employ the definitions we have introduced so far, and in particular \ref{additive_doc}, \ref{mul_doc}. Our main result, \ref{equ_kahler}, is guided by an analogy with classical algebra: to provide a presentation of an ordinary rig is tantamount to providing a coequalizer of two maps between free rigs since rigs form a category 2-monadic over $\Set$.

The fact that under mild assumptions on $\bsD$ --for example, if its multiplicative monad $\bsM$ is finitary-- the 2-category $\bsD\emdash\cate{Rig}$ has bicolimits, ensures that similar such constructions exist and can provide presentations of 2-rigs as suitable 2-dimensional colimits \cite{2catlimits} of diagrams of free 2-rigs.

If $\bsA$ denotes the monad on $\Cat$ for the additive doctrine, then for a category $\clC$, the $\bsA$-cocompletion $\bsA(\clC)$ is equivalent to the full subcategory of the small presheaf category $P(\clC)$ obtained by taking the closure of the representable functors under the class of $\bsA$-colimits.
\begin{remark}
	Using the distributive law, the monad for $\bsD$ is the composite $\bsA\bsM$. Hence, for \emph{every} doctrine $\bsD$, the free $\bsD$-rig $\bsD[\clC]$ on a category $\clC$ is always formed according to a simple two-step procedure: first, take the free multiplicative structure generated by $C$, i.e. the category $\bsM(\clC)$. Then, take the free $\bsA$-cocompletion of $\bsM(\clC)$.
\end{remark}
We have already seen an example of this in the case of Joyal species (in the doctrine $\bsD$ of symmetric monoidally cocomplete categories): it is the free cocompletion $[\Sigma^\op, \Set]$ of the free symmetric monoidal category $\Sigma$ on a single generator. Likewise, we may define multivariate species, say for example species in two variables, as the category $[\Sigma(2)^\op, \Set]$ equipped with Day convolution, where incidentally $\Sigma(2)$ is equivalent to $\Sigma \times \Sigma$.

For the remainder of this section, we return to symmetric 2-rigs (relative to some additive doctrine $\bsA$), and proceed to categorify some commutative algebra. The 2-category of $\bsA$\hyp{}algebras, being a 2-category of algebras for a KZ-monad, carries a monoidal product $\odot$ (see \cite{franco2011pseudo}) characterized by the fact that for $\bsA$-algebras $\clA$, $\clB$, $\clC$, functors $\clA \times \clB \to \clC$ that are $\bsA$\hyp{}cocontinuous in the separate $\clA$-, $\clB$-arguments are equivalent to $\bsA$\hyp{}cocontinuous functors $\clA \odot \clB \to \clC$.
\begin{proposition}[Coproduct of $\bsD$-rigs]
	Using the universal property one can show that if $\clR$, $\clS$ are $\bsD$-rigs, meaning here symmetric monoidally $\bsA$-cocomplete categories, then $\clR \odot \clS$ naturally acquires a $\bsD$-rig structure and is the coproduct of $\clR$ and $\clS$ in $\bsD\emdash\cate{Rig}$.
\end{proposition}
\begin{notation}[Extension of scalars]\label{ext_of_scala}
In particular, let $\clS = \bsD[Y]$ be the free $\bsD$-rig on a single generator $Y$. We write $\clR \odot \bsD[Y]$ as $\clR[Y]$; this plays a role analogous to a polynomial rig $C[Y]$ with coefficients in a rig $C$, and the construction is analogous to the `extension of scalars' from the initial rig $\bbN[Y]$ to the rig $C[Y]$ obtained as a coproduct in the category of rigs.
\end{notation}
\begin{remark}
    The formation of $\clR[Y]$ does not require working with symmetric 2-rigs: just as one can form a polynomial algebra $R[x]$ over a noncommutative rig $R$, so one can form a `polynomial' 2-rig $\clR[Y]$ over a monoidal 2-rig $\clR$, by taking a tensor product $\clR \odot \bsD[Y]$. However, this tensor product will generally not be a coproduct in $\bsD\emdash\cate{Rig}$ if we work outside the symmetric context.
\end{remark}
\subsection*{K\"ahler differentials}
Next, we sketch the construction of a $\bsD$-rig of K\"ahler differentials on a $\bsD$-rig $\clR$. Again, we borrow ideas from the analogous construction in algebraic geometry. Let $\bsD[Y]$ be the free $\bsD$-rig on a single generator $\{Y\}$, treated as a generic `indeterminate'.

Let $0: \bsD[Y] \to \bsD[Y]$ denote the essentially unique $\bsD$-rig morphism that takes $Y$ to $0$, and similarly let $Y^2: \bsD[Y] \to \bsD[Y]$ denote the morphism that takes $Y$ to $Y^{\otimes 2}$. The unique map $0 \to Y^2$ in $\bsD[Y]$ transports across the equivalence
\[\label{important_equcat}
\bsD\text{-Rig}(\bsD[Y], \bsD[Y]) \simeq \bsD[Y]
\]
to a symmetric monoidal natural transformation $0 \Rightarrow Y^2$ between $\bsD$-rig maps $0, Y^2: \bsD[Y] \to \bsD[Y]$. Extending scalars like in \ref{ext_of_scala}, we obtain a 2-cell in $\bsD\emdash\cate{Rig}$:
\[
	\label{a_coequ}
	\vcenter{\xymatrix@C=1.5cm{
	\clR[Y] \rtwocell^{\initial}_{Y^2}{}
	& \clR[Y] \ar[r]^-{q} & \clR[Y]/(Y^2)
	}}
\]
The `quotient' construction $q: \clR[Y] \to \clR[Y]/(Y^2)$ we are after is a coinverter of this 2-cell in the 2-category $\bsD\emdash\cate{Rig}$. In fact, diagram \eqref{a_coequ} satisfies precisely the universal property of a \emph{coinverter} (\cite[dual of (4.6)]{2catlimits}) when we adopt for $\clR[Y]/(Y^2)$ the concrete model deduced from \ref{true_polinomi}: each object of $\clR[Y]/(Y^2)$ is of the form $A + B\otimes Y$ we prove this in Appendix B, page \pageref{proof_of_coinverting}.
\begin{remark}
	Observe that for some 2-rig doctrines $\bsD$, this coinverter may be somewhat degenerate. For example, in the doctrine of cartesian 2-rigs (for any additive doctrine $\bsA$), the condition that an arrow $0 \to C^2$ is invertible in $\clR$ forces $C \cong 0$ (because $C$ is a retract of $C^2$), and in this case, the coinverter will be the 2-rig map $\clR[Y] \to \clR$ taking $Y$ to $0$ (cf. the fact that there are no nontrivial differentials on a cartesian 2-rig).
\end{remark}
\begin{proposition}\label{equ_kahler}
    For a doctrine $\bsD$ of symmetric 2-rigs, there is an equivalence $\clR[Y]/(Y^2) \simeq \clR \ltimes \clR$.
\end{proposition}
In combination with \ref{prop_augm}, this means that $\clR[\varepsilon] = \clR[Y]/(Y^2)$, equipped with the evident $\bsD$-rig map $\clR[\varepsilon] \to \clR$ taking $Y$ to $0$, represents augmented derivations.
\begin{corollary}\label{kaler_are_real}
	There is an equivalence of categories
	\[\Der(\clR,\clR) \cong 2\emdash\cate{Rig}(\clR,\clR[Y]/(Y^2))\]
	or in other words, the category of derivations $\clR \to \clR$ as in \ref{moroder} correspond to 2-rig morphisms $\clR \to \clR[Y]/(Y^2)$.
	More generally, there is an equivalence between derivations $\clR \to \clM$ values in a $\clR$-module $\clM$, and algebra morphisms between $\clR$ and the square-zero extension of \ref{squzero}.
\end{corollary}
The construction of free $\bsD$-rigs and \ref{kaler_are_real} allow to provide examples of differentials on categories of multivariate (or `colored', cf. \cite{mendez1993colored}) species.
\begin{definition}[Partial derivative]
    Let $\bsD[S]$ be the free $\bsD$-rig on a set or discrete category of generators $S$. For $s \in S$, define the \emph{partial derivative}
    \[
    \frac{\d}{\d s}: \bsD[S] \to \bsD[S]
    \]
    to be the derivation that corresponds to the $\bsD$-rig map $\bsD[S] \to \bsD[S][\varepsilon]$ that takes $s$ to $(s, I)$ and $t \in S$, $t \neq s$, to $(t, 0)$.\footnote{One can prove that the `Schwarz-Clairaut's theorem' of commutativity of composition of derivatives with respect different `indeterminates'. We refrain to provide such a proof in detail, as it is completely straightforward.}
\end{definition}
Every differential on $\bsD[S]$ is similarly formed from the $\bsD[S]$-rig maps $\bsD[S] \to \bsD[S][\varepsilon]$ taking each $s$ to $(s, a_s)$ for some choice of `coefficients' $a_s \in \bsD[S]$. In the case where the additive doctrine admits arbitrary coproducts, this differential may be denoted
\[
\d = \sum_{s \in S} a_s \frac{\d}{\d s}.
\]

Here is one more example of a differential 2-rig, bearing witness that differential structures on a symmetric 2-rig tend to be plentiful. The idea goes as follows: let $\bsD[X, Y]$ be the free $\bsD$-rig over two generators; given any two polynomials $p(X,Y), q(X,Y)$ we can build the `quotient 2-rig' killing off the `ideal' generated by $\{p,q\}$ as a suitable 2-colimit.
\begin{example}\label{hyperbolic}
    We consider the 2-rig $\clH := \bsD[X, Y]/(Y^2 + 1 \cong X^2)$ where we categorify the coordinate ring of an hyperbola. Here we have two morphisms $\bsD[T] \to \bsD[X, Y]$ to the free $\bsD$-rig on two generators, one taking $T$ to $Y^2 + 1$, the other taking $T$ to $X^2$; to form $\bsD[X, Y]/(Y^2 + 1 \cong X^2)$, construct a co-iso-inserter (\cite{2catlimits,bourkethesis}) between these two $\bsD$-rig maps.

    The differential $\d: \clH \to \clH$ is defined by $\d(X) = Y$, $\d(Y) = X$, and taking the co-iso-inserter $\varphi: Y^2 + 1 \to X^2$ to a canonical isomorphism $\d(Y^2 \cup 1) \to \partial(X^2)$ obtained as follows:
		\begin{multline}
			\d(Y^2 \cup 1) \cong \d(Y^2) + \d(1) \cong \d (Y^2) \cong \d Y \otimes Y \cup Y \otimes \d Y\\
			\cong X \otimes Y \cup Y \otimes X \stackrel{\sigma \cup \sigma}{\to} Y \otimes X \cup X \otimes Y = \d X \otimes X \cup X \otimes \d X \cong \partial(X^2)
		\end{multline}
    where $\sigma$ denotes an instance of the symmetry isomorphism.
\end{example}
\begin{proposition}[Free 2-rigs are differential]\label{free_are_diff}
	The free 2-rig $\Sigma[Y]$ and its cocompletion $\Sigma\llbracket Y\rrbracket$ with respect to arbitrary coproducts both admit at least one nontrivial derivation, which is uniquely determined by the request that the `generator' $Y$ goes to the monoidal (Day convolution) unit.
\end{proposition}
From the universal property of $\clR[Y]$, we deduce that it is the category generated under coproducts by formal expressions $A_n \otimes Y^n$  where $n\ge 0$ is an integer and $A_n\in\clR$.
\begin{proposition}\label{true_polinomi}
	Every object in the differential 2-rig $\clR[Y]$ admits a unique representation as a formal sum like $\sum_{i=0}^d A_i\otimes Y^i$.
\end{proposition}
\begin{proof}
	In Appendix B, page \pageref{proof_of_true_polinomi}.
\end{proof}
A particularly interesting example of a free 2-rig construction as differential 2-rig is where $S$ is a countable set whose elements we denote $\{Y, Y^{(1)},Y^{(2)}\dots, Y^{(n)},\dots\}$, that we interpret as the stock of all subsequent derivatives of a unique indeterminate $Y$. In other words, we construct a differential $\d: \bsD[S] \to \bsD[S]$ via the $\bsD$-rig map
\[
\bsD[S] \to \bsD[S][\varepsilon]
\]
that takes $Y^{(i)}$ to $(Y^{(i)}, Y^{(i+1)})$, in effect defining $\d(Y^{(i)}) = Y^{(i+1)}$.
This construction has a parallel in differential algebra, see e.g. \cite[Ch. 1]{van2012galois}. Hence we obtain, by `scalar extension' (tensoring with $\clR$)
\begin{example}[The 2-rig of differential polynomials]\label{diff_polys}
	We can define the \emph{2-rig of differential polynomials (with coefficients in a 2-rig $\clR$)} using an infinite set of `indeterminates' $\clY := \{Y=Y^{(0)}, Y^{(1)},Y^{(2)}\dots, Y^{(n)},\dots\}$ as above, and defining the 2-rig $\clR[Y^\d]$ as the free 2-rig of polynomials over $\clY$. This is a differential 2-rig where the differential $\d$ takes every `constant' $C \odot I \in \clR \odot \bsD[\clY]$ to $0$, and $\d(Y^{(i)})$ to $Y^{(i+1)}$.
\end{example}
The 2-rig $\bsD[Y^\d]$ defined above enjoys the following universal property: given a differential $\bsD$-rig $\clS$ and an element $A\in \clS$, there exists a unique morphism of differential 2-rigs $\bar X : \Sigma[Y^\d] \to \clS$ with the property that $Y\mapsto A$. In other words,
\begin{theorem}\label{diff_poly_rig}
	The free $\bsD$-rig of polynomials $\Sigma[Y^\d]$ of \ref{diff_polys} is the free differential 2-rig on a single generator $\{Y\}$.
\end{theorem}
\begin{remark}
    A slightly different way to put this theorem is: the monad $\bfE$ on $\Cat$, whose algebras are categories $\clR$ equipped with an endofunctor $D: \clR \to \clR$, distributes over the 2-rig monad $\bsA\bsM$ according to the Leibniz rule.\footnote{Intuitively, treat $D$ as a differential operator so that $D$ applied to a polynomial operator can be rewritten as a polynomial operator applied to $D$.} If $\bsD$ is a symmetric 2-rig doctrine, then the free differential $\bsD$-rig on a set of generators $S$ is
    $\bigodot_{s \in S} \bsD[Y_s^{\d}] = \bsD[\{Y_s^{(i)}\}_{s \in S, i \in \bbN}]$.
\end{remark}
We conclude the section concentrating on the proof of a \emph{chain rule} on free $\bsD$-rigs. If, following \cite{Joyal1986foncteurs}, we shall think about combinatorial species as categorified formal power series, a `chain rule' of the form $(f\circ g)'(x) = f'(g(x))g'(x)$ shall hold; it follows from an easy computation that this is the case when the substitution $F\circ G$ is interpreted as a \emph{substitution product}
(cf. for example \cite[§1.4]{bergeron1998combinatorial}). The present subsection provides a conceptual argument proving a chain rule valid for an abstract symmetric 2-rig doctrine.

Let $\bsD$ be a symmetric 2-rig doctrine, and recall equation \eqref{important_equcat}.
To each object $G$ of $\bsD[1]$, there is a corresponding $\bsD$-rig map denoted $- \circ G: \bsD[1] \to \bsD[1]$. Indeed, endofunctor composition on the left side $\bsD\text{-Rig}(\bsD[1], \bsD[1])$ transports to a monoidal structure on $\bsD[1]$ which, by abuse of notation, we denote as
$\circ: \bsD[1] \times \bsD[1] \to \bsD[1]$;
variously called the \emph{substitution} monoidal product or \emph{ple\-thys\-tic}  monoidal pro\-duct \cite{mendez1993colored}. The unit for the substitution product is the generator $X: 1 \to \bsD[1]$.

The standard derivative $\d: \bsD[1] \to \bsD[1]$ is defined by $\d(X) = I$, i.e., is given by the unique $\bsD$-rig map $\bsD[1] \to \bsD[1][\varepsilon]$ that takes $X$ to $(X, I)$. The proof of the chain rule appears in Appendix B, page \pageref{proof_of_chain}.

\begin{theorem}\label{chain} Given species $F,G$, there is a canonical isomorphism
$(F \circ G)' = (F' \circ G) \otimes G'$.
\end{theorem}
\section{Conclusions and future work}\label{conclu}
We introduced the notion of differential 2-rig as a unifying structure for many diverse instances of a category equipped with a `derivation', an endofunctor that satisfies the Leibniz property.

The link between the Leibniz property for an endofunctor and a pair of tensorial strengths thereon hints at a connection between differential structures and \emph{applicative} structures, wi\-de\-ly used in functional programming \cite{McBride2008ApplicativePW,Paterson2012ConstructingAF}. Given the `geometric' flavour of differential 2-rig theory, this is a surprising connection between apparently disconnected fields that will be further investigated.

Another enticing future direction of investigation in\-vol\-ves \emph{differential equations}: one can define a `differential polynomial endofunctor' (DPE) in a similar fashion in which polynomial functors are defined inductively (cf. \cite[§2.2]{jacobs2017introduction}), by declaring that all polynomial expressions $\sum_{i=0}^n A_i\otimes \d^i$ obtained from a differential $\d : \clR \to \clR$ on a 2-rig form the category $\cate{DPE}(\clR,\d)$. The theory of differential equations in the category of species has a long and well-established history: it was most\-ly developed by Leroux and Viennot \cite{0cbbef7b84b3481ef76c8e88a86f80d411b7492f,4871af04f741391c618dddc08247a3c09b1c707d,5ec089353e113ab36e728b8df6401b98de7a5d1c,0445243bd75f64484c47f7db18f2569031b5e3bd} Labelle \cite{labelle1985combinatoire} and other authors built on that \cite{5cec315a6425164172c54ba1f2826e3856592949,a7260d9aa6cf11af5e8a6e16d6cb29f836d2ff04}. The general theory of combinatorial differential equations studied in these papers might fruitfully be framed into a more general theory of DPEs and their solutions.

\printbibliography
\appendix
\section{Coherence conditions for strengths}

\begin{definition}[Morphism of $\clR$-modules]\label{moromod}
	Given a monoidal 2-category, let $\clR$ be a pseudomonoid, and let $\clM,\clN$ be two $\clR$-bimodules. We denote the left and right unit constraints by $j$ and $k$, and left and right associativity constraints by $\alpha$ and $\beta$.
	A (lax) \emph{morphism of $\clR$-bimodules} $F : \clM \to \clN$ is a 1-cell $\clM\to\clN$, together with, 2-naturally in objects $\clA$, maps
	\[
		\vcenter{\xymatrix@R=0cm{
		C\lact FM \ar[r]^-{\xi^{\l}} & F(C\lact M) \\
		FM\ract C' \ar[r]^-{\xi^{\r}} & F(M\ract C')
		}}
	\]
	for every $C,C': \clA \to \clR$ and $M: \clA \to \clM$.

	These maps must satisfy the following coherence conditions (we give only the ones pertaining to the left constraints $\lambda, \alpha$):
	\begin{itemize}
		\item naturality in both components; the diagrams
		      \[\vcenter{\xymatrix{
			      F(C\lact M) \ar@{<-}[r]^-{\xi^{\l}}\ar[d]_{F(f\lact u)} & C \lact FM \ar[d]^{f\lact Fu}\\
			      F(C'\lact M') \ar@{<-}[r]_-{\xi^{\l}} & C' \lact FM'\\
			      }}\]
		      are commutative, for every pair of morphisms $f : C\to C'$ and $u : M\to M'$.
		\item compatibility with the monoidality of the action maps, in the form of compatibility with the isomorphisms $C\lact (C'\lact M)\cong (C\otimes C')\lact M$ witnessing the strong monoidality of the action functor and $I\lact M\cong M$: the diagram
		      \begin{gather*}
			      \vcenter{\xymatrix{
			      &F(I\lact M)\ar[dr]^{Fj}\ar[dl]_{\xi^{\l}}& \\
			      I\lact FM \ar[rr]_j && FM
			      }}\\
			      \vcenter{\xymatrix{
			      F((C\otimes C')\lact M) \ar[r]^{F\alpha}\ar@{<-}[d]_{\xi^{\l}}& F(C\lact (C'\lact M))\ar@{<-}[d]^{\xi^{\l}} \\
			      (C\otimes C')\lact FM \ar[d]_\alpha & C\lact F(C'\lact M)\ar@{<-}[d]^{\xi^{\l}} \\
			      C\lact (C'\lact FM) \ar@{=}[r] & C\lact (C'\lact FM)
			      }}
		      \end{gather*}
		      are commutative, for $C, C'\in\clR$, $M\in\clM$.
	\end{itemize}
\end{definition}
\section{Proofs}
\begin{proof}[Proof of \ref{nogo1}]\label{proof_of_nogo1}
	If $X$ is an object of $\clR$, then we have a map $\d (!): \d (X) \to \d (1) = 0$. But for any object $A$ that admits a map $f: A \to 0$, we must have $A \cong 0$, because the composite $\pi_2 \circ (f, 1_A) : A\to 0\times A \to A$
	is the identity of $A$, and $0 \times A \cong 0$ by distributivity.
\end{proof}
\begin{proof}[Proof of \ref{nogo2}]\label{proof_of_nogo2}
	Let $[A, -]$ be the right adjoint of $A \otimes -: \clR \to \clR$. Then left strengths on $T$ are in natural bijection with enrichment structures on $T$, i.e. maps $t_{AB} : [A, B] \to [TA, TB]$,
	and by application of the faithful functor $\clR(I, -): \clR \to \cate{Set}$, such enrichment structures map one-to-one (not onto necessarily) to $\cate{Set}$-enrichment structures $\clR(A, B) \to \clR(TA, TB)$. However, there is only one of these.
\end{proof}
\begin{proof}[Proof of \ref{der_ova_monoid}]\label{proof_of_der_ova_monoid}
	Let $m : M\otimes M \to M$ be the multiplication of $M$; the map $\d m$ is of the following form
	\[
		\leib{M}{M} \xto{\d m} \d M
	\]
	and by the universal property of coproducts, it can be written as the map $\cop{i_R}{i_L}$, where
	\[
		i_R : \d M \otimes M \to \d M \qquad i_L : M \otimes \d M \to \d M.
	\]
	Evidently, these maps are our candidate right and left actions of $M$ over $\d M$.

	Now, the fact that $m$ is associative is witnessed by the commutative square
	\[
		\vcenter{\xymatrix{
		M\otimes M \otimes M\ar[r]^-{M\otimes m}\ar[d]_{m\otimes M} & M \otimes M \ar[d]^m\\
		M \otimes M \ar[r]_m & M
		}}
	\]
	If we derive it, applying $\d$ to each map, we get the commutative square
	\begin{adju}[.475]
		\xymatrix@C=-2cm{
		& \d M \otimes M \otimes M \cup M\otimes \d M \otimes M \cup M \otimes M \otimes \d M \ar[dr]^{\qquad\d M \otimes m \cup M\otimes \d m}\ar[dl]_{\d m \otimes M \cup m \otimes \d M\qquad} & \\
		\leib{M}{M} \ar[dr]_{\cop{i_R}{i_L}} && \leib{M}{M} \ar[dl]^{\cop{i_R}{i_L}}\\
		& \d M
		}
	\end{adju}
	which, thanks to the Leibniz action of $\d$ on morphisms, can be seen as the object- and morphism\hyp{}wise sum of two diagrams
	\[
		\vcenter{\xymatrix{
				\ar[r]^{\d M\otimes m}\ar[d]_{\d m \otimes M} & \ar[d]^{i_R} &&&
				\ar[r]^{M\otimes \d m}\ar[d]_{m \otimes \d M} & \ar[d]^{i_L} \\
				\ar[r]_{i_R} & &&&
				\ar[r]_{i_L} &
			}}
	\]
	witnessing precisely that $i_R$ is a right action, and $i_L$ is a left $M$-action on $\d M$.
\end{proof}
\begin{proof}[Proof of \ref{true_polinomi}]\label{proof_of_true_polinomi}
	The proof is divided into two parts: first, we show that every object of $\Sigma[Y]$ can be written as a formal sum $\sum E_i\cdot Y^i$, where $E_i$ is a set and $Y^i$ is the $i$th convolution power of the monoidal unit for Day convolution; then, we show that similarly, every object of $\clR[Y]$ can be written as $\sum A_i\otimes Y^i$.

	As for the first claim, it follows from the fact that $\Sigma[Y]$ is the closure under coproducts of representables; as for the second claim, we shall show that $\clR[Y]$ has the universal property of the coproduct $\clR \odot \Sigma[Y]$, or more clearly, the pushout of the span $\Sigma[Y] \leftarrow \Fin \to \clR$.\footnote{Something analogous happens in commutative algebra, where rings of polynomials with coefficients in $R$ can be defines from free $\bfZ$-algebras $\bfZ[X]$ via a universal property, that the diagram
	\[\vcenter{\xymatrix{
				\bfZ \po \ar[r]\ar[d] & R \ar[d]\\
				\bfZ[Y] \ar[r] & R[Y]
			}}\]
	is a pushout.}

	Inspecting the universal property: first of all, there is an obvious cospan of 2-rig morphisms $\clR \to \clR[Y] \leftarrow \Sigma[Y]$ sending $C$ to $C\otimes Y^0$ and $[n]$ to $Y^n$; and given a diagram
	\[\vcenter{\xymatrix{
		& \clR \ar[d]\ar@/^1pc/[ddr]^G\\
		\Sigma[Y]\ar[r] \ar@/_1pc/[drr]_F & \clR[Y]\ar@{.>}[dr]|{\cop{F}{G}} & \\
		&& \clB
		}}\]
	we can define a unique dotted functor $\cop{F}{G} : \clR[Y] \to \clB$ as
	\[\sum_{i=0}^d A_i\otimes Y^i \mapsto \sum_{i=0}^d GA_i\otimes (FY)^{\otimes n}, \]
	since a 2-rig morphism $F : \Sigma[Y] \to \clB$ is completely determined by the image of $Y=y(1)$.
\end{proof}
\begin{proof}[Proof of \ref{the_theta_comonad}]\label{proof_theta}
	Let's examine the left closed case: this means that every $A\otimes \firstblank$ has a right adjoint. The right closed case is analogous, mutatis mutandis.

	The condition of having a right tensorial strength amounts to the presence of maps $t_{AB} : A\otimes DB \to D(A\otimes B)$ satisfying suitable conditions.

	The maps $t_{AB}$ now transpose to
	\[
		\xymatrix{\hat{t}_{AB} : DB \ar[r] & [A,D(A\otimes B)]}
	\]
	and the $\hat{t}_{AB}$'s are natural in $B$, and a wedge in $A$: this means that there is a unique map
	\[
		\xymatrix{\hat{t}_{B} : DB \ar[r] & \displaystyle\int_A[A,D(A\otimes B)];}
	\]
	we now claim that
	\begin{enumtag}{m}
		\item the correspondence $\lambda B.\int_A[A,D(A\otimes B)]$ is an endofunctor of $\clA$;
		\item the correspondence $\Theta : D\mapsto \lambda B.\int_A[A,D(A\otimes B)]$ is an endofunctor of $[\clA,\clA]$; moreover, it is a comonad;
		\item a $\Theta$-coalgebra is exactly an endofunctor equipped with a right tensorial strength, whose components are obtained from the coalgebra map by reverse-engineering the construction of $\Theta$.
	\end{enumtag}
	The last part of the third claim is obvious; what remains of the third claim is an exercise on diagram chasing. Functoriality is evident from the canonical way in which we built $\Theta$, and $DB \to \int_A[A,D(A\otimes B)]$ attach to the components of a natural transformation $D \To \Theta(D)$.

	It remains to show that $\Theta$ is a comonad:
	\begin{itemize}
		\item the counit is obtained from the terminal wedge of $\Theta(D)$, taking the component on the monoidal unit (say, $I$):
		      \[
			      \xymatrix@C=1.4cm{ \int_A[A,D(A\otimes B)] \ar[r]^-{\epsilon_B =\pi_I} & [I, D(I\otimes B)] \cong DB  }
		      \]
		\item the comultiplication is obtained from the following computation:
		      \begin{align*}
			      \Theta \Theta (D)(A) & = \int_B [B, \Theta (D)(A \otimes B)]                       \\
			                           & \cong \int_B [B, \int_C [C, D(A \otimes B \otimes C)]]      \\
			                           & \cong \int_B \int_C [B, [C, D(A \otimes B \otimes C)]]      \\
			                           & \cong \int_B \int_C [B \otimes C, D(A \otimes B \otimes C)]
		      \end{align*}
	\end{itemize}
	It is evident, now, that the projections $\pi_{B\otimes C}$ of the terminal wedge of $\Theta(D)$ assemble into a morphism $\sigma : \Theta \To \Theta\Theta$
	of the right type; moreover, this choice of $\epsilon$ and $\sigma$ is the unique that satisfies the counit equations of a comonad; showing that $\sigma : \Theta \To \Theta^2$ is coassociative is a matter of diagram chasing.
\end{proof}
\begin{proof}[Proof of \ref{equ_kahler}]\label{proof_of_equ_kahler}
    Let $\bsD[Y] \to \clR \ltimes \clR$ be the essentially unique $\bsD$-rig map that takes $Y$ to $(0, I)$, and let $\clR \to \clR \ltimes \clR$ be the map taking $C$ to $(C, 0)$. By pairing these maps, we get a map $\varepsilon: \clR[Y] \to \clR \ltimes \clR$ out of the coproduct $\clR[Y] = \clR \odot \bsD[Y]$. It is clear that $\varepsilon$ coinverts the 2-cell $0 \Rightarrow Y^2$. Given a $\bsD$-rig map $F: \clR[Y] \to \clS$ that coinverts this 2-cell, define a map $\overline{F}: \clR \ltimes \clR \to \clS$ that takes $(R, 0)$ to $F(R)$, and $(0, I)$ to $F(Y)$. One may check that $\overline{F}$ is a $\bsD$-rig map.
\end{proof}
\begin{proof}[Proof of \ref{chain}]\label{proof_of_chain}
    Let $\d$ denote the standard derivative, and denote the $\bsD$-rig map $- \circ G$ by $\varphi$. Then the left side corresponds to the value at an object $F$ of the composite $\bsD$-rig map
    \[
    \bsD[1] \stackrel{\varphi}{\to} \bsD[1] \stackrel{\langle 1, \d\rangle}{\to} \bsD[1][\varepsilon],
    \]
    taking $F$ to $(F \circ G, (F \circ G)')$ and taking $X$ to $(G, G')$. On the other hand, $\varphi \circ \d: \bsD[1] \to \bsD[1]$ is a $\varphi$-augmented derivation, and so is $(\varphi \d) \otimes G'$. By \ref{prop_augm}, it corresponds to the $\bsD$-rig map $\bsD[1] \to \bsD[1][\varepsilon]$ taking $F$ to
    \[
    (\varphi(F), (\varphi \d(F)) \otimes G') = (F \circ G, (F' \circ G) \otimes G').
    \]
    This map is uniquely determined by where it sends the generator $X$, but this value on $X$ is the same as before,
    \[
    (X \circ G, (X' \circ G) \otimes G') = (G, G').
    \]
    This means the $\bsD$-rig maps
    \[
    F \mapsto (F \circ G, (F \circ G)'), \qquad F \mapsto (F \circ G, (F' \circ G) \otimes G')
    \]
    coincide, and this completes the proof.
\end{proof}
\subsection*{Generalized Leibniz rule and Taylor formula}

\begin{proof}[Proof of \ref{iter_lei_spec}]\label{proof_of_iter_lei_spec}
	Expand $(F*G)^{(U)}[C] = (F*G)[C+U]$ using the fact that
	\[(F*G)[C\cup U] = \sum_{A+B=C+U}FA\times GB.\]
	For each indexing pair $(A, B)$, put $A' = A \cap C$, $B' = B \cap C$, $S = A \cap U$, $T = B \cap U$. Then $A = A' \cup S$ and $B = B' \cup T$ and $S \cup T = U$. It follows that

	\begin{align*}
		(F*G)[C+U] & \cong \sum_{A+B=C+U}FA\times GB                                             \\
		           & \cong \sum_{S \cup T = U} \sum_{A' + B' = C} F[A'\cup S] \times G[B'\cup T] \\
		           & \cong \sum_{S\cup T=U} \sum_{A'\cup B'=C} F^{(S)}[A'] \times G^{(T)}[B']    \\
		           & \cong \sum_{S \cup T = U} (F^{(S)}*G^{(T)})[C]
	\end{align*}
	This concludes the proof.
\end{proof}
\begin{proof}[Proof of \ref{taylor}]\label{proof_of_taylorella}
	Let's first observe that we have the analytic functor formula
	\[
		F(X) = \int^n F[n] \times X^n
	\]
	which mimics the Maclaurin series expansion; this is obtained from the fact that $F(\firstblank) \cong \Lan_JF$, and the integral on the right-hand side is exactly that Kan extension.

	Now given an $n$-element set $U$, let's interpret $\d^n F (A) = \d^{(U)} F (A) = F(U+A)$ as a species in the variable $n$ but as analytic in the set-variable $A$. We have then the formula
	\[
		\d^n F(A) = \int^m F[m+n] \times A^m.
	\]
	And thus we can categorify $\sum_{n=0}^\infty \frac{\d^n f(a)}{n!} x^n$ as the double coend
	\begin{multline*}
		\int^{nm} F[m+n] \times A^m \times X^n \\ \cong \int^{nmj} F[j] \times \Sigma(j, m+n) \times A^m \times X^n                       \\
		\cong \int^j F[j] \times \left(\int^{m, n} \Sigma(j, m+n) \times A^m \times X^n\right).
	\end{multline*}
	Now, we have an isomorphism
	\[
		\int^{mn} \Sigma(j, m+n) \times A^m \times X^n \cong (A + X)^j
	\]
	which ultimately comes out of the fact that $\Set$ is an extensive category: there exists an equivalence of categories $\Set/A \times \Set/X \cong \Set/(A + X)$. We conclude that
	\[
		\int^j F[j] \times (A + X)^j
	\]
	is the value $F(A+X)$ of the analytic functor $F(\firstblank)$.
\end{proof}
\begin{proof}[Proof that \eqref{a_coequ} is a coinverter]\label{proof_of_coinverting}
	The universal property of the coinverter amounts to the following:
	\begin{enumtag}{c}
		\item for each morphism of 2-rigs $p : \clC[Y] \to \clX$ such that $\initial \to p(Y^2\otimes R(Y))$ is invertible in $\clX$, there exists a unique (up to isomorphism) $\bar p : \clC[Y]_{<2} \to \clX$ such that $q\circ \bar p = p$;
		\item for each natural transformation $\alpha : p \To p'$ of 2-rig morphisms with the property that the horizontal composition $\alpha\boxminus u$ is an isomorphism, there exists a unique $\bar\alpha : \bar q \To \bar q'$ such that $q * \bar\alpha = \alpha$.
	\end{enumtag}
	Both properties descend from the fact that $p$, being a 2-rig morphism, preserves coproducts; if $p(A+BY+RY^2)\cong p(A+BY) + p(RY^2)$, and the initial arrow $\initial \to p(RY^2)$ is an isomorphism, the vertical right arrow in the commutative diagram
	\[
		\vcenter{\xymatrix{
				p(A \cup BY)  \cup  \initial \ar[r] \ar[d] & p(A \cup BY)\ar[d] \\
				p(A \cup BY)  \cup  p(RY^2) \ar[r] & p(A \cup BY \cup RY^2)
			}}
	\]
	is an isomorphism; thus, $p$ is uniquely determined by its action on $\clC[Y]_{<2}$, and $\bar p(A \cup BY)$ can be defined just as $p(A \cup BY)$.
	For what concerns 2-cells $\alpha : p \To p'$, a similar diagram
	\[
		\vcenter{\xymatrix{
		p(A \cup BY \cup RY^2) \ar[r]\ar[d]_\wr & p'(A \cup BY \cup RY^2) \ar[d]^\wr\\
		p(A \cup BY) \ar@{=}[d]\ar[r]_{\alpha_{A+BY}} & p'(A \cup BY) \ar@{=}[d]\\
		p(A \cup BY) \ar[r]_{\bar\alpha_{A+BY}} & p'(A \cup BY)
		}}
	\]
	is commutative, so $\alpha$ is uniquely determined by its components at objects $A+BY$ of $\clC[Y]_{<2}$.
\end{proof}
\end{document}